\tikzset{>=stealth',
  head/.style = {fill = white, text=black},
  plaque/.style = {draw, rectangle, minimum size = 10mm, fill=white}, 
     pil/.style={->,thick},
  junct/.style = {draw,circle,inner sep=0.5pt,outer sep=0pt, fill=black}
  }
\theoremstyle{plain}
\newtheorem{theorem}{Theorem}[section]
\newtheorem{conjecture}[theorem]{Conjecture}
\newtheorem{proposition}[theorem]{Proposition}
\newenvironment{example}
  {\pushQED{\qed}\examplex}
  {\popQED\endexamplex}
\theoremstyle{definition}
\newtheorem{remark}[theorem]{Remark}
\numberwithin{equation}{section}
\definecolor{darkblue}{rgb}{0.0,0,0.7}
\newcommand{\newword}[1]{\textcolor{darkblue}{\textbf{\emph{#1}}}}
\newcommand{\BB}{\mathsf{B}}
\newcommand{\PP}{\mathsf{P}}
\newcommand{\bbb}{\mathsf{b}}
\newcommand{\inc}{\ensuremath{\mathrm{Inc}}}
\newcommand{\Frame}{\ensuremath{\mathsf{Frame}}}
\begin{document}

\title{Dynamics of plane partitions:\\ Proof of the Cameron--Fon-Der-Flaass conjecture}

\author{Rebecca Patrias}
\address[RP]{Department of Mathematics, University of St.\ Thomas, St.\ Paul, MN 55105, USA}\email{patr0028@stthomas.edu}

\author{Oliver Pechenik}
\address[OP]{Department of Combinatorics \& Optimization, University of Waterloo, Waterloo, ON N2L 3G1, Canada}\email{oliver.pechenik@uwaterloo.ca}

\date{\today}

\begin{abstract}
One of the oldest outstanding problems in dynamical algebraic combinatorics is the following conjecture of P.~Cameron and D.~Fon-Der-Flaass (1995). Consider a plane partition $P$ in an $a \times b \times c$ box ${\sf B}$. Let $\Psi(P)$ denote the smallest plane partition containing the minimal elements of ${\sf B} - P$. Then if $p= a+b+c-1$ is prime, Cameron and Fon-Der-Flaass conjectured that the cardinality of the $\Psi$-orbit of $P$ is always a multiple of $p$.

This conjecture was established for $p \gg 0$ by Cameron and Fon-Der-Flaass (1995) and for slightly smaller values of $p$ in work of K.~Dilks, J.~Striker, and the second author (2017). 
Our main theorem specializes to prove this conjecture in full generality. 
 \end{abstract}

\maketitle

\section{Introduction}\label{sec:intro}

The relatively young field of \emph{dynamical algebraic combinatorics} studies dynamical properties of actions on various fundamental objects of algebraic combinatorics. For example, alternating sign matrices, plane partitions, root systems, and Young tableaux all carry combinatorially-natural cyclic group actions. In dynamical algebraic combinatorics, one is interested in establishing features of the resulting orbit structures, such as  \emph{cyclic sieving phenomena} \cite{Reiner.Stanton.White}, \emph{homomesies} \cite{Propp.Roby}, \emph{periodicities}, and \emph{resonance phenomena} \cite{Dilks.Pechenik.Striker}. For an excellent survey of the area, see \cite{Striker:notices}.

One of the most studied actions in dynamical algebraic combinatorics is called \emph{rowmotion}. Rowmotion can be defined as an action on the order ideals of any finite poset $\PP$. Interesting dynamical properties appear when $\PP$ is chosen to be a poset of significance in algebraic combinatorics. While much of the dynamical algebraic combinatorics literature dates from the past 15 or so years, rowmotion has older roots; it first appeared in 1974 through independent work of P.~Duchet \cite{Duchet} (in a special case) and of A.~Brouwer and A.~Schrijver \cite{Brouwer.Schrijver} (in full generality).

One of the oldest open problems in dynamical algebraic combinatorics has been a 1995 conjecture of P.~Cameron and D.~Fon-Der-Flaass \cite{Cameron.Fonderflaass} on the periodicity of rowmotion for plane partitions. The main goal of this paper is to prove their conjecture, which we now recall.

Fix positive integers $a,b,c \in \mathbb{Z}^+$ and consider plane partitions sitting inside a rectangular $a \times b \times c$ box. We identify this box with the poset $\BB_{a,b,c} = {\bf a} \times {\bf b} \times {\bf c}$ that is the product of three chains, and identify plane partitions in this box with order ideals of the poset $\BB_{a,b,c}$. 

We write $J(\PP)$ for the set of all order ideals of a poset $\PP$. Given $I \in J(\PP)$, define $\Psi(I)$ to be the order ideal generated by the minimal elements of the complementary order filter $\PP - I$. Following \cite{Striker.Williams}, we refer to the operator $\Psi$ as \newword{rowmotion}. It is straightforward to see that the action of $\Psi$ is reversible, so it permutes the elements of $J(\PP)$ and partitions them into disjoint orbits. For a general poset $\PP$, these orbits tend to be large and without discernible structure. However, for special posets $\PP$, intricate structure has been discovered; see, e.g., \cite{Armstrong.Stump.Thomas,Brouwer.Schrijver,Cameron.Fonderflaass,Mandel.Pechenik,Panyushev,Propp.Roby,Rush.Shi,Striker.Williams,Vorland} for various such results.

Cameron and Fon-Der-Flaass \cite{Cameron.Fonderflaass} made the following periodicity conjecture for rowmotion on the poset $\BB_{a,b,c}$.

\begin{conjecture}[{\cite{Cameron.Fonderflaass}}]\label{conj:main}
	Suppose $p=a+b+c-1$ is prime. Then the cardinality of every $\Psi$-orbit of $J(\BB_{a,b,c})$ is a multiple of $p$.
\end{conjecture}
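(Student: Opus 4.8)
The plan is to transport the dynamics to a more rigid combinatorial model before attacking the divisibility directly. First I would invoke the toggle-group conjugacy of Striker and Williams \cite{Striker.Williams} to replace rowmotion $\Psi$ by the conjugate promotion operator, which has the same orbit sizes, and then use the equivariant bijection of Dilks, Striker, and the second author \cite{Dilks.Pechenik.Striker} to identify $(J(\BB_{a,b,c}),\Psi)$ with the set $\inc$ of increasing tableaux of a rectangular shape determined by $a,b,c$, filled with entries drawn from $\{1,\dots,p\}$ where $p=a+b+c-1$, equipped with $K$-promotion $\mathcal{P}$. The crucial feature of this model is that the $p$ available labels are cycled by $\mathcal{P}$, so that a $\Z/p\Z$-symmetry is built into the dynamics from the outset; it therefore suffices to prove that every $\mathcal{P}$-orbit on $\inc$ has size divisible by $p$.

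Next I would set up the resonance framework of \cite{Dilks.Pechenik.Striker}. The goal is an equivariant \emph{frame} map $\Frame$ from $\inc$ to the set of length-$p$ cyclic words over a fixed alphabet, intertwining $K$-promotion with the rotation $R$ of order $p$; concretely, $\Frame$ should record, in cyclic order indexed by $\Z/p\Z$, a content window of the tableau that $\mathcal{P}$ merely advances by one step, so that $\Frame(\mathcal{P}(T))=R(\Frame(T))$ for all $T$. This is exactly the statement that $(\inc,\langle\mathcal{P}\rangle,\Frame)$ exhibits resonance with frequency $p$. Identifying the \emph{correct} window, rather than a naive ``which labels occur'' statistic, is already part of the work, since the statistic must be arranged so that it is provably nondegenerate; it is plausible that the paper's main theorem is precisely a sharp structural statement about $\Frame$ (for instance, that it descends to a genuinely free $\Z/p\Z$-action, possibly through a global model such as $\incgl$), from which the conjecture falls out as a special case.

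Granting such a frame map, the deduction is short, and this is where primality enters. If a $\mathcal{P}$-orbit has size $n$, then applying $\Frame$ gives $R^{n}(\Frame(T))=\Frame(T)$, so the rotation-period $d$ of the cyclic word $\Frame(T)$ divides $n$. Because $p$ is prime, $d\mid p$ forces $d\in\{1,p\}$, and $d=1$ occurs only for the constant words. Hence, provided $\Frame(T)$ is never constant, every word arising has full period $p$, so $p\mid n$; pushing this back through the bijection proves the conjecture. The lemma to isolate is therefore the \textbf{non-constancy of the frame}: for every $T\in\inc$, the entries of $\Frame(T)$ are not all equal.

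I expect this non-constancy lemma to be the main obstacle, and it is exactly the point at which the previous arguments stall: the resonance frequency $p$ was already understood, but ruling out the degenerate constant frames was achieved only for $p\gg 0$. The difficulty is combinatorial and local to $K$-promotion -- one must show that no increasing tableau of the relevant rectangle can have a rotation-invariant content window, including the awkward regimes where $p$ is small relative to $a,b,c$ or where the box is nearly flat and the tableau has little room. I would attack this by analyzing directly how the $K$-jeu-de-taquin slides underlying $\mathcal{P}$ move the extreme labels $1$ and $p$, arguing that the constraints forcing increase along rows and columns prevent the window from being homogeneous; failing a uniform argument, one isolates the finitely many genuinely small cases left open by \cite{Dilks.Pechenik.Striker} and disposes of them by hand. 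Establishing this refined property of $\Frame$ in full generality is what upgrades the known resonance to the complete Cameron--Fon-Der-Flaass conjecture.
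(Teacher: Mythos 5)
Your high-level architecture does match the paper's: pass to increasing tableaux under $K$-promotion via the equivariant bijection of \cite{Dilks.Pechenik.Striker}, attach to each tableau a statistic whose period under promotion is controlled, use an arithmetic argument to force that statistic to be promotion-invariant on the orbit, and then rule out invariance by a rigidity lemma. But both ingredients that actually carry the proof are missing, and the candidate you propose for the first one cannot work. A cyclic content window (recording which labels occur, indexed by $\Z/p\Z$) is indeed rotated one step by $K$-promotion --- that is the resonance theorem of \cite{Dilks.Pechenik.Striker} --- but it can be genuinely constant: whenever $ab \ge p$ there are tableaux in $\inc^p(a\times b)$ using every label, and these are exactly the degenerate cases that confined the earlier results to large $p$. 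You acknowledge that a ``correct window'' must be found but do not produce one, and your fallback of disposing of ``finitely many small cases'' by hand is not available: the regime left open by \cite{Dilks.Pechenik.Striker} is $c \le \frac{2ab-2}{3}-a-b+2$, an infinite family as $(a,b)$ varies.

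The paper's statistic is not a rotated window at all but the restriction $\Frame(T)$ of the filling to the boundary boxes of the rectangle, and the controlling fact, quoted from \cite{Pechenik:frames}, is the periodicity $\Frame(\Psi^q(U))=\Frame(U)$ rather than a one-step rotation law. If $\gcd(k,q)=1$, then $\psi^q$ generates the cyclic group acting on the orbit, so $\Frame$ is constant along the entire orbit and in particular $\Frame(T)=\Frame(\Psi(T))$; note that this step needs no primality and yields the strictly stronger conclusion $\gcd(k,q)>1$ of Theorem~\ref{thm:main}. The real content is then the rigidity statement (Proposition~\ref{prop:sameframe}): $\Frame(V)=\Frame(\Psi(V))$ forces $V$ to be the minimal tableau and $\ell=a+b-1$, proved by showing every boundary box must lie in the stream-bed of the promotion flow path, whence the boundary entries increase consecutively around the frame. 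Your proposal leaves both the identification of this statistic and the rigidity lemma entirely open, so as it stands it is a plausible research program rather than a proof.
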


\begin{remark}
	Conjecture~\ref{conj:main} proposes a special kind of \emph{resonance} in the sense of \cite{Dilks.Pechenik.Striker}. That is, while the $\Psi$-orbit cardinalities remain unknown, they all ``resonate with the frequency $p$,'' being all of the form $hp$ for some positive integers $h$. It would be very interesting to understand the values $h$ that appear. Experimentally, there appears to be a strong bias toward odd values of $h$. We currently have no explanation for this phenomenon, nor do we have good upper bounds on the values $h$.
\end{remark}

Our main result is the following, which implies Conjecture~\ref{conj:main}. 

\begin{theorem}\label{thm:main}
	Let $k$ be the cardinality of any $\Psi$-orbit of $J(\BB_{a,b,c})$. Then 
	\[
	\gcd(k,a+b+c-1) > 1.
	\]
\end{theorem}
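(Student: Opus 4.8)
The plan is to transport the problem from rowmotion on the box poset $\BB_{a,b,c}$ to \emph{$K$-promotion} on increasing tableaux, where a rotational symmetry of frequency $p := a+b+c-1$ is visible, and then to extract the divisibility from the orbit structure of that symmetry. First I would invoke the equivariant bijection of Dilks, Striker, and the second author \cite{Dilks.Pechenik.Striker} identifying $(J(\BB_{a,b,c}),\Psi)$ with a set of increasing tableaux $\inc$ of a fixed rectangular shape under $K$-promotion $\mathrm{Pro}$. Under this bijection a $\Psi$-orbit of cardinality $k$ corresponds to a $\mathrm{Pro}$-orbit of cardinality $k$, so it suffices to prove $\gcd(k,p)>1$ for every $\mathrm{Pro}$-orbit.

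The coarse version of the needed symmetry is already in \cite{Dilks.Pechenik.Striker}: to an increasing tableau $T$ one attaches its content vector (the number of cells carrying each of the $p$ values), and $\mathrm{Pro}$ cyclically rotates this vector. If the content vector of $T$ is \emph{aperiodic} -- i.e.\ has full period $p$ under rotation -- then $\mathrm{Pro}^{k}(T)=T$ immediately forces $p\mid k$. The difficulty, and the reason this does not yet prove the conjecture, is that the content vector can have a proper period $e\mid p$; in the extreme case it is perfectly balanced (all entries equal, $e=1$), which happens precisely when $p$ divides the number of cells of $T$, and there the content argument yields only the vacuous $1\mid k$. Closing exactly this gap is the crux of the theorem.

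To do so I would refine the content vector to a genuinely finer rotation-equivariant invariant $\Frame(T)$ -- most naturally defined after passing to the global increasing tableaux $\incgl$ -- taking values in a set carrying a cyclic rotation $R$ with $R^{p}=\mathrm{id}$ and satisfying the resonance identity
\[
\Frame(\mathrm{Pro}(T)) = R\bigl(\Frame(T)\bigr).
\]
Granting this, the divisibility follows from a stabilizer count. Fix $T$ with $\mathrm{Pro}^{k}(T)=T$, and let $e$ be the least positive integer with $R^{e}\bigl(\Frame(T)\bigr)=\Frame(T)$. Iterating the resonance identity gives $R^{k}\bigl(\Frame(T)\bigr)=\Frame(T)$, so $e\mid k$; and since $R$ has order $p$ we also have $e\mid p$, whence $e\mid\gcd(k,p)$. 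Thus the theorem reduces to the single assertion that $e>1$ for every increasing tableau $T$, i.e.\ that no frame arising from an increasing tableau is fixed by the elementary rotation $R$. The prime case (Conjecture~\ref{conj:main}) is then immediate, since $e\mid p$ and $e>1$ force $e=p$ and hence $p\mid k$; note also that this single assertion already rules out $\mathrm{Pro}$-fixed points, so no degenerate orbits need separate treatment.

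The step I expect to be the main obstacle is precisely this combinatorial lemma: $\Frame(T)$ is never $R$-fixed, \emph{including} in the balanced case where the content vector alone is rotation-invariant. I would attack it by giving an explicit description of which frames are realized by increasing tableaux of the relevant shape and showing that the strict-increase conditions along rows, columns, and diagonals propagate in a way incompatible with invariance under the elementary rotation. The delicate point is exactly the situation left open by the content argument, where the number of cells is divisible by $p$; there one cannot rely on a counting imbalance and must instead exploit the structural constraints recorded by the finer invariant $\Frame$. Once this lemma is established, the general theorem and its prime specialization follow as above.
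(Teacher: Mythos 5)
Your reduction is essentially the paper's: pass through the $\Psi$-equivariant bijection of \cite{Dilks.Pechenik.Striker} to $K$-promotion on $\inc^{p}(a\times b)$, take the frame (first/last row and column) as the refined invariant, and reduce the divisibility claim to the single assertion that the frame of a relevant increasing tableau is never fixed by one application of promotion. The group-theoretic bookkeeping is sound: the paper derives $\Frame(T)=\Frame(\Psi(T))$ from $\gcd(k,q)=1$ by noting that $\Psi^q$ then generates the cyclic action on the orbit, using \cite[Theorem~2]{Pechenik:frames} ($\Frame(U)=\Frame(\Psi^q(U))$ for all $U$); your stabilizer count via an order-dividing-$p$ rotation $R$ on frames is an equivalent, if slightly stronger-than-needed, formulation. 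Two smaller inaccuracies: the multiplicity content vector is \emph{not} rotated by $K$-promotion (multiplicities change under $K$-jeu de taquin; only the \emph{binary} content of \cite{Dilks.Pechenik.Striker} resonates), and the input you need from \cite{Pechenik:frames} is just the periodicity of the frame under $\Psi^q$, from which $\Frame(T)=\Frame(\Psi(T))$ follows by a B\'ezout argument without positing the intertwiner $R$.

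The genuine gap is that you never prove the assertion you yourself identify as the crux: that no $T\in\inc^q(a\times b)$ with $q>a+b-1$ has $\Frame(T)=\Frame(\Psi(T))$. Your plan (classify which frames are realized and show the strict-increase conditions are incompatible with invariance) is a statement of intent rather than an argument, and classifying realizable frames is harder than what is needed. The paper closes exactly this gap in Proposition~\ref{prop:sameframe} using the flow path and stream-bed of promotion: if a frame box $\bbb$ were outside the stream-bed, then $\Psi(V)(\bbb)=V(\bbb)-1$, so every frame box lies in the stream-bed; since the flow path only moves rightward and downward, each adjacent pair of frame boxes along the top/bottom rows and left/right columns must lie in the flow path, which forces $V(\bbb^\rightarrow)=V(\bbb)+1$ and $V(\bbb^\downarrow)=V(\bbb)+1$ all the way around the frame. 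Hence the entries increase consecutively from $V(1,1)=1$ to $V(a,b)=a+b-1$, while separately $V(a,b)=\ell$, so $\ell=a+b-1$ and $V$ is the minimal tableau --- contradicting $q>a+b-1$. Without this lemma (or a substitute), your proposal does not go beyond the reductions already available in \cite{Dilks.Pechenik.Striker} and \cite{Pechenik:frames}.
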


Previous work had succeeded in establishing Conjecture~\ref{conj:main} only for very small and very large values of $c$.
The case $c=1$ was established earlier by Brouwer and Schrijver \cite{Brouwer.Schrijver} and the case $c=2$ by Cameron and Fon-Der-Flaass \cite{Cameron.Fonderflaass}. (Indeed, in these ``small $c$'' cases the size of every $\Psi$-orbit is exactly $p$.) Cameron and Fon-Der-Flaass \cite{Cameron.Fonderflaass} also established the ``large $c$'' case 
$
c > ab - a - b + 1.
$
This bound was later improved to 
\[
c > \frac{2ab - 2}{3} -a - b + 2
\] 
in \cite[Theorem~4.13]{Dilks.Pechenik.Striker}.

Our superficially short proof of Theorem~\ref{thm:main} and Conjecture~\ref{conj:main} is uniform and does not rely on any of these previous partial results. Nonetheless, we are heavily indebted to previous work that was not available when Cameron and Fon-Der-Flaass first made their conjecture. Explicitly, our proof calls upon some of the main results of \cite{Dilks.Pechenik.Striker} and \cite{Pechenik:frames}. In a deeper sense, our proof builds on technology and theorems developed previously in the various other papers \cite{Pechenik:CSP, Striker.Williams, Thomas.Yong:K, Thomas.Yong:Plancherel}, as well. 

More specifically, in Section~\ref{sec:reformulation}, we  use the results of \cite{Dilks.Pechenik.Striker} to translate Theorem~\ref{thm:main} into an equivalent statement about the combinatorics of \emph{$K$-promotion} on \emph{increasing tableaux}. $K$-promotion was first studied in \cite{Pechenik:CSP} as an outgrowth of the combinatorics of $K$-theoretic Schubert calculus for Grassmannians introduced in \cite{Thomas.Yong:K}, and has since been studied in several purely combinatorial contexts. In Section~\ref{sec:proof}, we then prove this translated conjecture, relying on the main theorem of \cite{Pechenik:frames}.

\section{Reformulation in terms of increasing tableaux}\label{sec:reformulation}

Our first step in proving Theorem~\ref{thm:main} is to use the results of \cite{Dilks.Pechenik.Striker} to translate it into an equivalent statement regarding different combinatorics. First, we recall the definitions of increasing tableaux and the $K$-promotion operator on them.

We write $a \times b$ to denote the grid of boxes with $a$ rows and $b$ columns.  Equivalently, this is the Young diagram of the partition with $a$ parts all of size $b$.
Index the boxes of $a \times b$ as in a matrix, so the box $(1,2)$ is the box in the second column from the left in the top row. For a box $\bbb$ in $a \times b$, we write $\bbb^\rightarrow$ for the box immediately right of $\bbb$, $\bbb^\downarrow$ for the box immediately below $\bbb$, etc. A \newword{short ribbon} in $a \times b$ is an edge-connected subset of boxes with at most two in any row or column.

An \newword{increasing tableau} of shape $a \times b$ is a filling $T$ of the boxes of $a \times b$ with positive integers, so that rows strictly increase from left to right and columns strictly increase from top to bottom. That is, for every box $\bbb$, we have $T(\bbb) < T(\bbb^\rightarrow)$ and $T(\bbb) < T(\bbb^\downarrow)$. 
We write $\inc(a \times b)$ for the set of all increasing tableaux of shape $a \times b$ and write $\inc^q(a \times b)$ for the finite subset with entries at most $q$. Note that in an increasing tableau, if we look at the set of boxes containing either $i$ or $i+1$, the edge-connected components of this set are all short ribbons.

\begin{example}
An increasing tableau of shape $3 \times 6$ is
\[
T = \ytableaushort{12356{10}, 24589{11},679{10}{13}{17}} \in \inc^{17}(3 \times 6).
\]
Note that not every number from $1$ to $17$ need appear. Note also that, for example, the boxes labeled $4$ and $5$ make up two short ribbons, while the boxes labeled $1$ and $2$ make up a single short ribbon.
\end{example}

Following \cite{Buch.Samuel}, we say that $T \in \inc(a \times b)$ is \newword{minimal} if we have 
\begin{itemize}
	\item $T(1,1) = 1$,
	\item $T(\bbb^\rightarrow) = T(\bbb) + 1$ (for all $\bbb$ not in the rightmost column), and
	\item $T(\bbb^\downarrow) = T(\bbb) + 1$ (for all $\bbb$ not in the bottom row).
\end{itemize}
Note that there is a unique minimal tableau $M_{a \times b}$ of each shape $a \times b$, and that $M_{a \times b}$ is the unique element of $\inc^{a+b-1}(a \times b)$. Moreover, $\inc^q(a \times b)$ is empty if $q < a+b-1$. 

We now recall the definition of \newword{$K$-promotion} on increasing tableaux.  Let $T \in \inc^q(a \times b)$. Consider the short ribbons consisting of the boxes labeled $1$ and $2$. Say a short ribbon is \newword{trivial} if it consists of a single box, and \newword{nontrivial} otherwise. For each trivial short ribbon, we do nothing, while for each nontrivial short ribbon, we swap the labels $1$ and $2$. The result is generally not an increasing tableau, but nonetheless consider the short ribbons in it consisting of the boxes labeled $1$ and $3$ and repeat this process, successively swapping the pairs of labels $(1,4),(1,5), \dots, (1,q)$ in nontrivial short ribbons. Note that, if the box in position $(1,1)$ originally had label $1$, then label $1$ finally appears only in position $(a,b)$. To finish, decrement the label in each box by $1$, and replace any resulting $0$ label by $q$. The result is now an increasing tableau in $\inc^q(a \times b)$, the $K$-promotion of $T$. See Example~\ref{ex:promotion} for an example of this process. We will abuse notation by also denoting the $K$-promotion of $T$ by $\Psi(T)$, as there can be no confusion with rowmotion of plane partitions. We write $\Psi^\bullet(T)$ to denote the $\Psi$-orbit of the increasing tableau $T$.

\begin{remark}
	Increasing tableaux are a special case of the more classically studied \emph{semistandard tableaux} and $K$-promotion shares features with M.-P.~Sch\"utzenberger's \emph{promotion} (see \cite{Schutzenberger}) for semistandard tableaux; however, promotion of semistandard tableaux does not preserve the subset of increasing tableaux and $K$-promotion does not coincide with promotion.
\end{remark}

\begin{example}\label{ex:promotion} Starting with the tableau $T\in \inc^9(4\times 4)$ shown below, we illustrate the process of computing its $K$-promotion $\Psi(T)$. At each step, trivial short ribbons are shown in light gray and nontrivial short ribbons are shown in darker gray. 
\ytableausetup{boxsize=1.4em}
\begin{align*}T&=\begin{ytableau}*(gray) 1 & *(gray) 2 & 4 & 5 \\ *(gray) 2 & 3 & 5 & 6\\ 4 & 5 & 7 & 8\\ 5 & 6 & 8 & 9\end{ytableau}\rightarrow\begin{ytableau}2 & *(gray) 1 & 4 & 5 \\ *(gray) 1 & *(gray) 3 & 5 & 6\\ 4 & 5 & 7 & 8\\ 5 & 6 & 8 & 9\end{ytableau}\rightarrow\begin{ytableau}2 & 3 & *(lightgray) 4 & 5 \\ 3 & *(lightgray) 1 & 5 & 6\\ *(lightgray) 4 & 5 & 7 & 8\\ 5 & 6 & 8 & 9\end{ytableau}\rightarrow\begin{ytableau}2 & 3 &  4 & *(lightgray) 5 \\ 3 & *(gray) 1 & *(gray) 5 & 6\\ 4 & *(gray) 5 & 7 & 8\\ *(lightgray) 5 & 6 & 8 & 9\end{ytableau}\rightarrow\begin{ytableau}2 & 3 &  4 & 5 \\ 3 & 5 & *(gray) 1 & *(gray) 6\\ 4 & *(gray) 1 & 7 & 8\\ 5 & *(gray) 6 & 8 & 9\end{ytableau} \\ &\rightarrow \begin{ytableau}2 & 3 &  4 & 5 \\ 3 & 5 & 6 & *(lightgray) 1\\ 4 & 6 & *(lightgray) 7 & 8\\ 5 & *(lightgray) 1 & 8 & 9\end{ytableau} \rightarrow \begin{ytableau}2 & 3 &  4 & 5 \\ 3 & 5 & 6 & *(gray) 1\\ 4 & 6 & 7 & *(gray) 8\\ 5 & *(gray) 1 & *(gray) 8 & 9\end{ytableau} \rightarrow  \begin{ytableau}2 & 3 &  4 & 5 \\ 3 & 5 & 6 & 8 \\ 4 & 6 & 7 & *(gray) 1\\ 5 & 8 & *(gray) 1 & *(gray) 9\end{ytableau} \rightarrow 
 \begin{ytableau}2 & 3 &  4 & 5 \\ 3 & 5 & 6 & 8 \\ 4 & 6 & 7 & 9\\ 5 & 8 & 9 & 1\end{ytableau} \rightarrow 
  \begin{ytableau}1 & 2 &  3 & 4 \\ 2 & 4 & 5 & 7 \\ 3 & 5 & 6 & 8\\ 4 & 7 & 8 & 9\end{ytableau}=\Psi(T)\end{align*}
\end{example}
\ytableausetup{boxsize=normal}

By \cite[Theorem~4.4]{Dilks.Pechenik.Striker}, there is a $\Psi$-equivariant bijection between the sets $J(\BB_{a,b,c})$ and $\inc^{a+b+c-1}(a \times b)$. Hence, to prove Theorem~\ref{thm:main} and Conjecture~\ref{conj:main}, it is sufficient to establish the following.

\begin{theorem}\label{thm:tab}
Let $q > a+b -1$ and suppose the $\Psi$-orbit of $T \in \inc^q(a \times b)$ has cardinality $k$. Then $
\gcd(k, q) > 1.
$
\end{theorem}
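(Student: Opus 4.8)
The plan is to reduce Theorem~\ref{thm:tab} to the main theorem of \cite{Pechenik:frames}, which attaches to each increasing tableau a combinatorial \emph{frame} $\Frame(T)$ on which $K$-promotion acts by a cyclic rotation. Concretely, I will use that there is a $\Psi$-equivariant map $\Frame$ from $\inc^q(a\times b)$ to a set of frames carrying a rotation operator $\rho$ with $\rho^q = \mathrm{id}$, so that $\Frame(\Psi(T)) = \rho(\Frame(T))$ for every $T$. This converts a hard statement about (potentially very large and structureless) promotion orbits into a tractable statement about rotation orbits of a much simpler object of ``length'' $q$.

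Granting this, the divisibility is essentially group-theoretic. Let $d$ be the order of $\Frame(T)$ under $\rho$, that is, the least $d \ge 1$ with $\rho^d(\Frame(T)) = \Frame(T)$. Since $\rho^q = \mathrm{id}$, we get $d \mid q$. Applying equivariance to the identity $\Psi^k(T) = T$ yields $\rho^k(\Frame(T)) = \Frame(\Psi^k(T)) = \Frame(T)$, so $d \mid k$ as well. Hence $d \mid \gcd(k,q)$, and consequently $\gcd(k,q) \ge d$. Thus the entire theorem reduces to the single inequality $d > 1$.

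The heart of the matter, and the step I expect to be the main obstacle, is therefore to show that $\Frame(T)$ is never fixed by the elementary rotation $\rho$ when $q > a+b-1$; equivalently, that $d \neq 1$. Here I would argue combinatorially from the definition of $\Frame$: a $\rho$-invariant frame is ``maximally symmetric,'' and the expectation is that such total symmetry can only come from the unique minimal tableau $M_{a\times b}$, which lives in $\inc^{a+b-1}(a\times b)$ and not in $\inc^q(a\times b)$ for $q > a+b-1$. The extra labels forced by the strict inequality $q > a+b-1$ should introduce a ``gap'' into the frame that a single rotation must displace, thereby precluding invariance. Making this rigorous will require unwinding exactly how $\Frame$ is defined in \cite{Pechenik:frames} and tracking how entries beyond the minimal filling obstruct rotational symmetry; I anticipate this is where the genuine work lies, the preceding paragraphs being formal once the frame formalism is in hand.

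Finally, once $d > 1$ is established we conclude $\gcd(k,q) \ge d > 1$, which is Theorem~\ref{thm:tab}. Combined with the $\Psi$-equivariant bijection of \cite[Theorem~4.4]{Dilks.Pechenik.Striker} between $J(\BB_{a,b,c})$ and $\inc^{a+b+c-1}(a\times b)$, and the observation that $q = a+b+c-1$ under this identification, this yields Theorem~\ref{thm:main}; specializing to prime $p = a+b+c-1$ forces $d = p$ and hence $p \mid k$, recovering Conjecture~\ref{conj:main}.
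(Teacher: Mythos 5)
Your overall architecture is the right one --- pass to frames, then show that a promotion-invariant frame forces minimality --- but your first step rests on a premise that is false. There is no rotation operator $\rho$ on frames satisfying $\Frame(\Psi(T)) = \rho(\Frame(T))$: the frame of $\Psi(T)$ is \emph{not} determined by the frame of $T$, because labels from the interior can flow onto the frame during $K$-promotion. For a concrete counterexample, take $q = 6$ and the two tableaux
\[
T_1 = \ytableaushort{124,235,456}\,, \qquad T_2 = \ytableaushort{124,245,456} \ \in \ \inc^6(3 \times 3),
\]
which differ only in the (non-frame) center box. Direct computation gives
\[
\Psi(T_1) = \ytableaushort{123,245,356}\,, \qquad \Psi(T_2) = \ytableaushort{134,345,456}\,,
\]
whose frames already disagree in the top row. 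So the equivariance $\Frame \circ \Psi = \rho \circ \Frame$ you posit cannot hold, and the ``order $d$ of $\Frame(T)$ under $\rho$'' is not defined. What \cite{Pechenik:frames} actually provides is only the periodicity statement $\Frame(\Psi^q(U)) = \Frame(U)$ for every $U \in \inc^q(a \times b)$. Your divisibility argument survives in repaired form if you work with the function $m \mapsto \Frame(\Psi^m(T))$ on $\Z/k\Z$: its set of periods contains both $q$ and $k$, hence contains $\gcd(k,q)$, so if $\gcd(k,q) = 1$ the frame is constant along the entire orbit and in particular $\Frame(T) = \Frame(\Psi(T))$. This is precisely how the paper argues (phrased as: when $\gcd(k,q)=1$, the element $\psi^q$ generates the cyclic group acting on the orbit).

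The second issue is that the step you yourself identify as ``the heart of the matter'' --- that $\Frame(V) = \Frame(\Psi(V))$ forces $V$ to be the minimal tableau $M_{a \times b}$ with $\ell = a+b-1$ --- is left as an expectation (``should introduce a gap that a single rotation must displace''), and the rotation language in which you phrase it is unavailable by the above. This is Proposition~\ref{prop:sameframe}, and proving it is where the genuine combinatorial content lies: one shows $V(1,1)=1$ and $V(a,b)=\ell$, then that every frame box must lie in the stream-bed of $V$ (else its entry would decrement), and finally that membership of adjacent frame boxes in the flow path forces the entries to increase consecutively around the frame, pinning the largest entry at $a+b-1$. Without this argument, and with the faulty equivariance, the proposal does not yet constitute a proof.
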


\begin{remark}
	The hypothesis $q > a+b-1$ in Theorem~\ref{thm:tab} is necessary merely to exclude the minimal tableau $M_{a \times b}$, corresponding under the $\Psi$-equivariant bijection of \cite{Dilks.Pechenik.Striker} to the empty plane partition in the degenerate $a \times b \times 0$ box. Obviously, these objects have $\Psi$-orbits of size $1$. 
\end{remark}

\section{Proof of Theorem~\ref{thm:tab}}\label{sec:proof}

Let $q \geq a+ b - 1$ and fix $T \in \inc^q(a \times b)$. Suppose $|\Psi^\bullet(T)| = k$ and $\gcd(k,q) = 1$. We aim to show that $T$ is minimal, so $q = a+b-1$.

The \newword{frame} of the shape $a \times b$ is the set $\Frame(a \times b)$ consisting of those boxes in the first or last column, or first or last row, of $a \times b$. The \newword{frame} $\Frame(U)$ of the tableau $U \in \inc^q(a \times b)$ is the restriction of the filling $U$ to $\Frame(a \times b)$.

\begin{example}
	For $T$ as in Example~\ref{ex:promotion}, the frame consists of the boxes shaded in light gray below.
	\[\begin{ytableau}*(lightgray) 1 & *(lightgray) 2 & *(lightgray) 4 & *(lightgray) 5 \\ *(lightgray) 2 &  3 &   5 &  *(lightgray) 6\\ *(lightgray) 4 &   5 & 7 &  *(lightgray) 8\\ *(lightgray) 5 &  *(lightgray) 6 & *(lightgray)  8 &  *(lightgray) 9\end{ytableau}\]
\end{example}

Consider the cyclic group $C_k = \langle \psi \rangle$ of order $k$. Define an action of $C_k$ on $\Psi^\bullet(T)$ by $\psi \cdot U = \Psi(U)$ for all $U \in \Psi^\bullet(T)$.
 Since $k$ and $q$ are relatively prime, the group element $\psi^q$ also generates $C_k$. Hence, every $U \in \Psi^\bullet(T)$ is of the form $\Psi^{mq}(T)$ for some positive integer $m \in \mathbb{Z}^+$. 
 
 By \cite[Theorem~2]{Pechenik:frames}, we have $\Frame(U) = \Frame(\Psi^q(U))$ for all tableaux $U \in \inc^q(a \times b)$. 
 Hence, by the observation of the previous paragraph, we have $\Frame(U) = \Frame(T)$ for every $U \in \Psi^\bullet(T)$. In particular, $\Frame(T) = \Frame(\Psi(T))$.
 
 The condition $\Frame(T) = \Frame(\Psi(T))$ turns out to be very strict. Indeed, the following proposition  implies that $T$ is therefore a minimal tableau and $q = a+b-1$, completing the proof of Theorem~\ref{thm:tab}.

\begin{proposition}\label{prop:sameframe}
	Suppose $V \in \inc^\ell(a \times b)$ satisfies $\Frame(V) = \Frame(\Psi(V))$. Then $V$ is minimal and $\ell = a+b-1$.
\end{proposition}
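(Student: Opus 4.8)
The plan is to reduce everything to showing that $\ell = a+b-1$, where $\ell$ is the modulus used in $K$-promotion: since $M_{a \times b}$ is the unique element of $\inc^{a+b-1}(a \times b)$ and $V \in \inc^\ell(a \times b)$, the equality $\ell = a+b-1$ forces $V = M_{a \times b}$, hence $V$ minimal. First I would pin down the two corners. If $V(1,1) \geq 2$, then no box of $V$ is labeled $1$, so at every stage of $K$-promotion the relevant short ribbons are trivial (an increasing tableau has no two equal entries adjacent), no swaps occur, and $\Psi(V)$ is just $V$ with every entry decremented by $1$; this changes every frame entry, contradicting $\Frame(V) = \Frame(\Psi(V))$. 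Hence $V(1,1) = 1$. Then, as noted in the description of $K$-promotion, the label $1$ sweeps from $(1,1)$ to $(a,b)$, so after the swaps the unique $1$ sits at $(a,b)$ and the final decrement sends it to $\ell$; thus $\Psi(V)(a,b) = \ell$, and the frame hypothesis gives $V(a,b) = \ell$.

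The engine of the proof is a reformulation of the frame condition. Comparing $V$ with $\Psi(V)$ box by box on the frame and accounting for the final global decrement, $\Frame(V) = \Frame(\Psi(V))$ amounts to saying that the net effect of the swaps is to raise the value of every frame box $\bbb \neq (a,b)$ by exactly $1$ (from $V(\bbb)$ to $V(\bbb)+1$), while bringing $(a,b)$ down to the swept label $1$. I would then prove the key local lemma: for a box $\bbb$ in the last column other than $(a,b)$, the only way the swaps can leave it holding $V(\bbb)+1$ is for a copy of $1$ to reach $\bbb$ and, at the step indexed by $V(\bbb)+1$, to swap with the box $\bbb^\downarrow$ directly below; since $\bbb^\downarrow$ can only be carrying its original value at this stage, this forces $V(\bbb^\downarrow) = V(\bbb)+1$. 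Telescoping down the last column gives $V(1,b) = \ell - a + 1$, and the transposed argument along the last row gives $V(a,1) = \ell - b + 1$.

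To control the first row and first column — where the sweep of $1$'s can branch, so the forward analysis only yields a disjunction — I would pass to $W := \Psi(V)$. Because $\Frame(W) = \Frame(V)$ and $\Psi^{-1}(W) = V$, the tableau $W$ has the same frame as $V$ and satisfies $\Frame(W) = \Frame(\Psi^{-1}(W))$, i.e. $W$ is frame-invariant for reverse $K$-promotion. Reverse $K$-promotion is governed by the rotation-and-complement involution $\theta$ defined by $\theta(T)(i,j) = \ell + 1 - T(a+1-i,\,b+1-j)$, which conjugates $\Psi$ to $\Psi^{-1}$; applying the last-column and last-row lemma to $\theta(W)$ therefore shows that the first row and first column of $W$, hence of $V$, consist of consecutive integers. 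In particular the first row of $V$ is $1,2,\dots,b$, so $V(1,b) = b$. Combined with $V(1,b) = \ell - a + 1$ from the previous paragraph, this yields $\ell = a+b-1$, completing the proof.

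The main obstacle is the local sweep lemma together with its reverse-promotion counterpart: I must show that the branching, possibly-splitting trajectory of the label $1$ cannot conspire to produce the required $+1$ on a frame box in any way other than the clean ``enter as $1$, swap once with the next consecutive value'' mechanism described above. Making this rigorous requires a careful induction on the promotion steps, tracking exactly where copies of $1$ can and cannot appear along each side of the frame; the rotation-complement symmetry that transports the statement from the last sides to the first sides is standard for promotion but should be verified in the $K$-theoretic setting. The corner cases $a=1$ and $b=1$ are handled by the forward argument alone and need no appeal to reverse promotion.
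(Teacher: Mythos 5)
Your argument is correct in outline, and its first half coincides with the paper's: the determination of the two corners, and the treatment of the last column and last row by tracking where the label $1$ occupying a frame box \emph{goes} (necessarily into a swap with $\bbb^\downarrow$, resp.\ $\bbb^\rightarrow$, since the other candidate neighbor is missing). Where you genuinely diverge is the first row and first column. You are right that the forward analysis there yields only the disjunction $\min(V(\bbb^\rightarrow),V(\bbb^\downarrow)) = V(\bbb)+1$, but the paper resolves it without reverse promotion, via the dual observation: ask where the $1$ that must visit the stream-bed box $\bbb^\rightarrow$ in the top row \emph{came from}. It can only have arrived from the box above or the box to the left, and the former does not exist, so $\{\bbb,\bbb^\rightarrow\}$ lies in the flow path and $V(\bbb^\rightarrow)=V(\bbb)+1$ directly. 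Your alternative---conjugating by the rotate-and-complement involution $\theta$ to transport the last-column lemma to the first column---does work: writing $\Psi$ as the global decrement composed with the involutive swap operators $S_2,\dots,S_q$, one checks that $\theta\Psi\theta = \Psi^{-1}$ on rectangular increasing tableaux, and the hypothesis $\Frame(V)=\Frame(\Psi(V))$ does pass to $\theta(\Psi(V))$. So your route is valid but strictly heavier, requiring an extra (if standard) structural input that the symmetric flow-path observation renders unnecessary. The one point you must make rigorous in your local sweep lemma (the paper leans on the same fact, packaged as properties of the flow path): a box holding $1$ acquires its new value only from $\bbb^\rightarrow$ or $\bbb^\downarrow$, never through a longer ribbon, because no two adjacent boxes ever share a label during the procedure; hence a $1$-box in a nontrivial ribbon at step $i$ has an adjacent neighbor holding $i$, and by increasingness that neighbor cannot be $\bbb^\uparrow$ or $\bbb^\leftarrow$, whose original values are too small to still be in play.
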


Before proving this proposition, we need a few more definitions. Let $V\in \inc^\ell(a×\times b)$. Following \cite{Dilks.Pechenik.Striker}, we define the \newword{flow path} of $V$ to be the set of pairs $\{\bbb,\bbb'\}$ of adjacent boxes of $V$ such that $\bbb$ and $\bbb'$ are at some point part of the same nontrivial short ribbon during the application of $K$-promotion to $V$. We define the \newword{stream-bed} of $V$ to be the union of the flow path, i.e., the set of all boxes $\bbb$ appearing in any pair $\{\bbb, \bbb' \}$ of the flow path of $V$. (Warning: In \cite{Pechenik:CSP}, the term ``flow path'' was used to refer to what we here call a ``stream-bed.'') Observe that if $\bbb \neq (1,1)$ is in the stream-bed of $V$, then either $\{\bbb^\leftarrow, \bbb \}$ or $\{ \bbb^\uparrow, \bbb \}$ is in the flow path of $V$. Similarly observe that if $\bbb \neq (a, b)$ is in the stream-bed of $V$, then either $\{ \bbb, \bbb^\rightarrow \}$ or $\{ \bbb, \bbb^\downarrow \}$ is in the  flow path of $V$.

\begin{example}\label{ex:flow}
Let $T$ be as in Example~\ref{ex:promotion}. Then its stream-bed is the union of all the darker gray short ribbons in all the tableaux illustrated there.
\[\begin{ytableau}*(gray) 1 & *(gray) 2 & 4 & 5 \\ *(gray) 2 & *(gray) 3 &  *(gray) 5 &  *(gray) 6\\ 4 &  *(gray) 5 & 7 &  *(gray) 8\\ 5 &  *(gray) 6 & *(gray)  8 &  *(gray) 9\end{ytableau}\]
\end{example}

\begin{proof}[Proof of Proposition~\ref{prop:sameframe}]
We have $V(1,1) = 1$, for otherwise $\Psi(V)(1,1) = V(1,1) - 1$, contradicting $\Frame(V) = \Frame(\Psi(V))$. For any tableau $W \in \inc^\ell(a \times b)$ with $W(1,1) = 1$, we have $\Psi(W)(a,b) = \ell$. Hence, by $\Frame(V) = \Frame(\Psi(V))$, we also have $V(a,b) = \ell$.

	Consider $\bbb \in \Frame(a \times b)$. If $\bbb$ is not in the stream-bed of $V$, then $\Psi(V)(\bbb) = V(\bbb) - 1$, contradicting $\Frame(V) = \Frame(\Psi(V))$. Hence, every box of $\Frame(a \times b)$ must be in the stream-bed of $V$.
	
	Consider $\{\bbb, \bbb^\rightarrow\}$ in the top row of $V$. Since $\bbb^\rightarrow$ is in the stream-bed of $V$, the pair $\{\bbb, \bbb^\rightarrow\}$ must be in the flow path of $V$. Hence $\Psi(V)(\bbb) = V(\bbb^\rightarrow) - 1$. But by assumption $\Psi(V)(\bbb) = V(\bbb)$, so we have $V(\bbb^\rightarrow) = V(\bbb) + 1$. Similarly, we have $V(\bbb^\downarrow) = V(\bbb) + 1$ for $\bbb$ in the leftmost column of $V$.
	
	Consider $\{\bbb, \bbb^\rightarrow\}$ in the bottom row of $V$. Since $\bbb$ is in the stream-bed of $V$, the pair $\{\bbb, \bbb^\rightarrow\}$ must be in the flow path of $V$. Thus again we have $V(\bbb^\rightarrow) = V(\bbb) + 1$. Similarly, we have $V(\bbb^\downarrow) = V(\bbb) + 1$ for $\bbb$ in the rightmost column of $V$.

Therefore, the entries of $V$ increase consecutively around $\Frame(a \times b)$ from upper-left to lower-right. In particular, the largest entry of $V$ must be $a + b - 1$. But we already determined that this largest entry was $\ell$ in position $(a,b)$. Hence, $\ell = a + b - 1$ and $V$ is the minimal tableau $M_{a \times b}$.
\end{proof}

\section*{Acknowledgements}
OP was partially supported by a Mathematical Sciences Postdoctoral Research Fellowship (\#1703696) from the National Science Foundation.

The authors are grateful to Hugh Thomas for comments on an earlier draft of this paper.

\bibliographystyle{amsalpha} 
\bibliography{CFDF}

\end{document}